\title{Gaussian approximation of moments of sums of independent symmetric 
random variables with logarithmically concave tails\thanks{Partially supported by the Foundation for Polish Science}}
\author{Rafa{\l} Lata{\l}a
\thanks{Institute of Mathematics, 
University of Warsaw, 
Banacha 2, 
02-097 Warszawa, 
Poland
and
Institute of Mathematics,
Polish Academy of Sciences,
\'Sniadeckich 8,
P.O.Box 21, 00-956 Warszawa 10,
Poland, email: {\tt rlatala@mimuw.edu.pl}
}}
\date{}
\newtheorem{lem}{Lemma}
\newtheorem{cor}{Corollary}
\newtheorem{thm}{Theorem}
\newtheorem{prop}{Proposition}
\def\Ex{{\mathbf E}}
\def\Pr{{\mathbf P}}
\def\er{{\mathbb R}}
\def\ve{\varepsilon}
\def\cale{\mathcal E}
\begin{document}

\maketitle

\begin{abstract}
We study how well moments of sums of independent symmetric random variables
with logarithmically concave tails may be approximated by moments of Gaussian
random variables.
\end{abstract}

Let $\ve_1,\ve_2,\ldots$ be a Bernoulli sequence, i.e.\
a sequence of independent symmetric variables taking values $\pm 1$. Hitczenko \cite{H}
showed that for $p\geq 2$ and $S=\sum_{i}a_i\ve_i$,
\begin{equation}
\label{radmom}
\|S\|_p\sim \sum_{i\leq p}a_i^*+\sqrt{p}\Big(\sum_{i>p} (a_{i}^*)^2\Big)^{1/2}
\end{equation}
where $(a_i^*)$ denotes the nonincreasing rearrangement of $(|a_i|)$ and 
$f(p)\sim g(p)$ means that there exists a universal constant $C$ such that 
$C^{-1}f(p)\leq g(p)\leq Cf(p)$ for any parameter $p$ (see also \cite{MS} and \cite{HK}
for related results). Gluskin and Kwapie\'n \cite{GK} generalized the result 
of Hitczenko and found two sided bounds for moments of sums of independent symmetric
random variables with logarithmically concave tails (we say that $X$ has logarithmically
concave tails if $\ln\Pr(|X|\geq t)$ is concave from $[0,\infty)$ to $[-\infty,0]$).
In particular they showed that
for a sequence $(\cale_i)$ of independent symmetric exponential random variables
with variance 1 (i.e. the density $2^{-1/2}\exp(-\sqrt{2}|x|)$), $S=\sum_i a_i\cale_i$, 
and $p\geq 2$,
\begin{equation}
\label{expmom}
\|S\|_p\sim p\|a\|_{\infty}+\sqrt{p}\|a\|_2,
\end{equation}
where $\|a\|_p=(\sum_i |a_i|^p)^{1/p}$ for $1\leq p<\infty$ and 
$\|a\|_{\infty}=\sup|a_i|$.
Two sided inequality for moments of sums of arbitrary independent symmetric random variables
was derived in \cite{La}.

Results (\ref{radmom}) and (\ref{expmom}) suggest that if all coefficients are of order $o(1/p)$ then $\|S\|_p$
should be close to the $p$-th norm of the corresponding Gaussian sum that is to $\gamma_p\|a\|_2$,
where $\gamma_p=\|{\cal N}(0,1)\|_p=2^{p/2}\Gamma(\frac{p+1}{2})/\sqrt{\pi}$. 
The purpose of our note is to verify this assertion.

First we show the intuitive result that in the class of normalized symmetric random variables with logarithmically concave tails Bernoulli and exponential random variables
are extremal.

\begin{prop}
\label{extr}
Let $X_i$ be independent symmetric r.v.'s with logarithmically concave tails such that
$\Ex X_i^2=1$. Then for any
$p\geq 3$,
\[
\Big\|\sum_{i=1}^n a_i \ve_i\Big\|_p\leq \Big\|\sum_{i=1}^n a_i X_i\Big\|_p
\leq \Big\|\sum_{i=1}^n a_i \cale_i\Big\|_p.
\]
\end{prop}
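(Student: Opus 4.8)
The plan is to reduce the proposition to a one–variable replacement inequality combined with a convex–order comparison for the squares of the summands. Since a partial sum of independent symmetric variables is again symmetric, it suffices to prove: if $W$ is symmetric and independent of $X$, where $X$ is symmetric with log‑concave tails and $\Ex X^2=1$, then for $p\ge 3$ and any real $a$,
\[
\Ex|W+a\ve|^p\le\Ex|W+aX|^p\le\Ex|W+a\cale|^p,
\]
with $\ve$ Bernoulli and $\cale$ symmetric exponential of variance $1$, both independent of $W$. Granting this, the proposition follows by replacing $X_1,\dots,X_n$ by $\ve_1,\dots,\ve_n$ (for the lower bound) or by $\cale_1,\dots,\cale_n$ (for the upper bound) one coordinate at a time, the remaining coordinates forming the symmetric variable $W$ at each step. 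All the moments involved are finite, since log‑concave tails force finite moments of every order.

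Set $g(x)=\Ex_W|W+x|^p$. As $W$ is symmetric, $g$ is even; and for $p\ge 3$ the map $y\mapsto|y|^p$ is $C^2$, so (symmetrizing in $W$) for $x\ge 0$ one has $g''(x)=p(p-1)\Ex_W|W+x|^{p-2}=p(p-1)\Ex_W\tfrac12\big((|W|+x)^{p-2}+\big||W|-x\big|^{p-2}\big)$. For each fixed $w\ge 0$ the function $x\mapsto\tfrac12\big((w+x)^{p-2}+|w-x|^{p-2}\big)$ is nondecreasing on $[0,\infty)$ precisely because $p-2\ge 1$ (for $p=3$ it equals $\max(w,x)$), so $g''$ is nondecreasing on $[0,\infty)$. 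Since $g'(0)=0$ this gives $xg''(x)\ge g'(x)=\int_0^x g''(s)\,ds$, which is exactly the statement that $v\mapsto\phi(v):=g(\sqrt v)$ is convex on $[0,\infty)$. Using evenness of $g$ together with independence, $\Ex|W+aY|^p=\Ex\,\phi(a^2Y^2)$ for every symmetric $Y$ independent of $W$; in particular $\Ex|W+a\ve|^p=\phi(a^2)$. Hence the replacement inequality reduces to the two convex–order relations $1\preceq_{\mathrm{cx}}X^2\preceq_{\mathrm{cx}}\cale^2$ (recall $\ve^2\equiv 1$): convexity of $\phi$ and preservation of convex order under the scaling $v\mapsto a^2v$ yield $\phi(a^2)\le\Ex\phi(a^2X^2)\le\Ex\phi(a^2\cale^2)$.

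The relation $1\preceq_{\mathrm{cx}}X^2$ is immediate from Jensen's inequality and $\Ex X^2=1$; it uses no log‑concavity, so in fact $\|\sum a_i\ve_i\|_p\le\|\sum a_iX_i\|_p$ holds for all symmetric $X_i$ with $\Ex X_i^2=1$ and $p\ge 3$. The relation $X^2\preceq_{\mathrm{cx}}\cale^2$ is the heart of the matter. Write $N(t)=-\ln\Pr(|X|\ge t)$, a convex increasing function with $N(0)=0$. Since $\Ex X^2=\Ex\cale^2$, convex order is equivalent to $\Ex(X^2-c)_+\le\Ex(\cale^2-c)_+$ for all $c\ge 0$; writing out these stop–loss integrals and substituting $t=s^2$, this becomes $\int_T^\infty s\,e^{-N(s)}\,ds\le\int_T^\infty s\,e^{-\sqrt2\,s}\,ds$ for every $T\ge 0$, with equality at $T=0$ by the normalization $\Ex X^2=1$. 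Equivalently $R(T):=\int_0^T s\big(e^{-N(s)}-e^{-\sqrt2\,s}\big)\,ds\ge 0$ for all $T$. Now the sign of $e^{-N(s)}-e^{-\sqrt2\,s}$ equals that of $\sqrt2\,s-N(s)$, which is concave and vanishes at $s=0$; the normalization $R(\infty)=0$ forces $\sqrt2\,s-N(s)$ to take both signs (unless $N(s)\equiv\sqrt2\,s$, i.e.\ $X=\cale$), so by concavity it is positive on an initial interval and negative thereafter. Thus $R$ is first nondecreasing and then nonincreasing, with $R(0)=R(\infty)=0$, whence $R\ge 0$.

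I expect the main obstacle to be this last step: extracting from the single constraint $\Ex X^2=1$ the full single–crossing behaviour of $\sqrt2\,s-N(s)$, together with minor care about smoothness of $N$ (handled by approximation, the Bernoulli case being the degenerate extreme $N\equiv 0$ on $[0,1]$). A secondary point worth flagging is that the hypothesis $p\ge 3$ is genuinely used — it is exactly what makes $g''$ nondecreasing, equivalently $v\mapsto g(\sqrt v)$ convex — and one should expect the lower bound to fail for some coefficient sequences when $2<p<3$.
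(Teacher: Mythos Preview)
Your argument is correct. It shares the two essential ingredients with the paper's proof---the convexity property of the test function that comes from $p\ge 3$, and the single crossing of the tails of $|X_i|$ and $|\cale_i|$ forced by log-concavity together with the variance normalization---but packages them differently. The paper reduces to $\Ex|a+bX_i|^p\le\Ex|a+b\cale_i|^p$ for fixed $a,b$, observes that $\varphi(x)=\tfrac12(|a+bx|^p+|a-bx|^p)$ has $\varphi'$ convex on $[0,\infty)$ with $\varphi'(0)=0$, and then integrates the pointwise inequality $(\varphi'(t)-ct)\bigl(\Pr(|\cale_i|\ge t)-\Pr(|X_i|\ge t)\bigr)\ge 0$, where $c$ is chosen so that both factors change sign at the unique crossing point $t_0$ of the two tails. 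You push the same convexity one step further: monotonicity of $g''$ with $g'(0)=0$ makes $v\mapsto g(\sqrt v)$ convex, which recasts the whole question as the convex-order relation $X^2\preceq_{\mathrm{cx}}\cale^2$; your stop-loss verification of this is then the same single-crossing computation in different clothing. The convex-order formulation is arguably cleaner---it isolates the role of log-concavity as a single structural fact about the squares---and it gives the lower bound for free via Jensen ($\ve^2\equiv 1=\Ex X^2$ yields $1\preceq_{\mathrm{cx}}X^2$), whereas the paper obtains the lower bound by citing an external result of Figiel, Hitczenko, Johnson, Schechtman and Zinn.
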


\begin{proof}
Lower bound follows from Theorem 1.1 of \cite{5A} (in fact we do not use here the assumption of logconcavity
of tails). To prove the upper bound it is enough to show that for all $a,b\in \er$ and
$p\geq 3$,
\[
\Ex |a+bX_i|^p\leq \Ex|a+b\cale_i|^p.
\]
Let $\varphi(x)=\frac{1}{2}(|a+bx|^p+|a-bx|^p)$, then $\varphi'$ is convex on $[0,\infty)$ with 
$\varphi'(0)=0$. Since $\Ex X_i^2=1=\Ex\cale_i^2$ there exist $t_0$ such that
$\Pr(|X_i|\geq t_0)=\Pr(|\cale_i|\geq t_0)$. Logconcavity of tails implies
that $\Pr(|X_i|\geq t)\leq \Pr(|\cale_i|\geq t)$ for $t\geq t_0$ and the opposite inequality
holds for $0\leq t\leq t_0$. Let $\varphi'(t_0)=ct_0$ for some $c>0$. Then by convexity
of $\varphi'$ we have $(\varphi'(t)-ct)(\Pr(|\cale_i|\geq t)-\Pr(|X_i|\geq t))\geq 0$ for all
$t$. Thus
\begin{align*}
0&\leq \int_{0}^{\infty}(\varphi'(t)-ct)(\Pr(|\cale_i|\geq t)-\Pr(|X_i|\geq t))dt
\\
&=\Ex(\varphi(\cale_i)-\varphi(X_i))-\frac{c}{2}\Ex(\cale_i^2-X_i^2)
=\Ex|a+b\cale_i|^p-\Ex|a+bX_i|^p.
\end{align*}
\end{proof}

Next technical lemma will be used to compare characteristic functions of Bernoulli and
exponential sums.

\begin{lem}
Let $|a_1|\geq |a_2|\geq \ldots \geq|a_n|$. Then for any $t$,
\begin{equation}
\label{est1}
\prod_{i=1}^n\cos(a_i t)+\frac{1}{2}a_{1}^2t^2\geq \prod_{i=2}^{n}\frac{1}{1+a_{i}^2t^2/2}.
\end{equation}
\end{lem}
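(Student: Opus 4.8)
The plan is to prove the inequality by induction on $n$, peeling off the largest coefficient $a_1$ and reducing to a pointwise estimate on the cosine. First observe that the right-hand side only involves $a_2,\dots,a_n$, so it suffices to control $\prod_{i=1}^n\cos(a_it)$ from below by $\prod_{i=2}^n\cos(a_it)$ minus something dominated by $\frac12 a_1^2t^2$. The natural elementary bounds are $\cos(x)\geq 1-\frac{x^2}{2}$ and the classical tail bound $\cos(x)\leq \frac{1}{1+x^2/2}$, valid at least on a suitable range of $x$; the latter is exactly what turns the product of cosines into the product of Lorentzian factors on the right. So one reasonable route is: use $\cos(a_1t)\geq 1-\frac12 a_1^2t^2$ to write $\prod_{i=1}^n\cos(a_it)\geq (1-\frac12a_1^2t^2)\prod_{i=2}^n\cos(a_it)$, then bound $\prod_{i=2}^n\cos(a_it)\leq \prod_{i=2}^n\frac{1}{1+a_i^2t^2/2}$, and finally check that the correction term $\frac12 a_1^2t^2\prod_{i=2}^n\cos(a_it)$ is absorbed by the additive $\frac12 a_1^2t^2$ on the left — which works precisely when $\prod_{i=2}^n\cos(a_it)\le 1$, i.e.\ always, \emph{provided} all the cosines involved are nonnegative.

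The main obstacle is therefore the sign issue: $\cos(a_it)$ can be negative, and then neither $\cos(x)\le \frac{1}{1+x^2/2}$ (which fails badly) nor the absorption step above is available directly. I would handle this by a case split on the size of $|a_1t|$. If $|a_1t|\ge \sqrt{2}$ (say), then $\frac12 a_1^2t^2\ge 1\ge \prod_{i=2}^n\frac{1}{1+a_i^2t^2/2}$ since each factor on the right is at most $1$, and also $\prod_{i=1}^n\cos(a_it)\ge -1$, so the inequality is immediate once we note $\prod_{i=1}^n\cos(a_it)+\frac12 a_1^2t^2\ge -1+1=0$ — wait, that only gives $\ge 0$, which is not enough; so more carefully, when $\frac12a_1^2t^2\ge 2$ we get $\prod\cos + \frac12 a_1^2t^2 \ge -1+2 = 1 \ge$ RHS, and the remaining middle range $\sqrt2\le|a_1t|\le 2$ is a compact one-variable estimate. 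In the main regime $|a_1t|$ small, all $|a_it|\le|a_1t|$ are small, so every $\cos(a_it)>0$ and the three-step argument above goes through cleanly. The bookkeeping to patch the ranges together, and to verify $\cos x\le (1+x^2/2)^{-1}$ on $[0,\pi/2]$ (equivalently $(1+x^2/2)\cos x\le 1$ there, checked via the series or a derivative computation), is the only real work; the structural idea is just "drop the first cosine using its quadratic lower bound, Lorentzian-dominate the rest."

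An alternative I would keep in reserve, in case the constants in the case split are uncomfortable, is a direct induction: assuming \eqref{est1} for $n-1$ coefficients $a_2\ge\cdots\ge a_n$, multiply through by $\cos(a_1 t)$ when it is nonnegative and track the error terms, or symmetrically peel off $a_n$ instead. But I expect the "$\cos(a_1t)\ge 1-\frac12a_1^2t^2$ plus Lorentzian bound on the tail product plus crude bound for large argument" approach to be the cleanest, since it isolates the single genuinely quantitative input, namely $(1+x^2/2)\cos x\le 1$ for $|x|\le\pi/2$, and otherwise only uses that each right-hand factor is $\le 1$.
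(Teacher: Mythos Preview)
There is a genuine gap in your main (small-argument) case, and it is precisely the direction of the ``Lorentzian'' bound. You correctly state that $(1+x^2/2)\cos x\le 1$, i.e.\ $\cos x\le (1+x^2/2)^{-1}$ on $[0,\pi/2]$. But this means the target right-hand side $Q:=\prod_{i\ge 2}(1+a_i^2t^2/2)^{-1}$ \emph{dominates} $P:=\prod_{i\ge 2}\cos(a_it)$, not the other way around. Your chain gives
\[
\prod_{i=1}^n\cos(a_it)+\tfrac12 a_1^2t^2\ \ge\ (1-\tfrac12 a_1^2t^2)P+\tfrac12 a_1^2t^2\ =\ P+\tfrac12 a_1^2t^2(1-P)\ \ge\ P,
\]
and the ``absorption'' step $P\le 1$ only yields $\ge P$. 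To reach $Q$ you would still need $\tfrac12 a_1^2t^2(1-P)\ge Q-P$, which is a nontrivial quantitative statement not implied by $P\le 1$; your sketch never establishes it. In short, ``drop the first cosine, Lorentzian-dominate the rest'' cannot work as stated because the Lorentzian bound points the wrong way.

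The paper's proof takes a different route in this regime: it lower-bounds \emph{every} cosine simultaneously by $1-x_i$ (with $x_i=a_i^2t^2/2\in[0,1]$) and then proves the purely algebraic inequality
\[
\prod_{i=1}^n(1-x_i)+x_1\ \ge\ \prod_{i=2}^n\frac{1}{1+x_i},
\]
by multiplying through by $\prod_{i\ge2}(1+x_i)$ and using the ordering $x_1\ge x_i$ in the expansion. That ordering is exactly what your argument never exploits, and it is what bridges the gap between $P$ and $Q$. For the complementary range the paper splits at $\pi/2$ rather than at $2$: on $\sqrt2\le|a_1t|\le\pi/2$ all cosines are nonnegative so the product is $\ge 0$ and $\tfrac12 a_1^2t^2\ge 1$ suffices; on $|a_1t|\ge\pi/2$ one uses $\bigl|\prod_i\cos(a_it)\bigr|\le|\cos(a_1t)|$ together with the one-variable fact $\tfrac12 u^2-|\cos u|\ge 1$ for $u\ge\pi/2$. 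Your ``compact one-variable estimate'' on $\sqrt2\le|a_1t|\le 2$ is not actually one-variable, since several $\cos(a_it)$ can be negative there; the $\pi/2$ split avoids this.
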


\begin{proof}
We will consider 3 cases.

{\bf Case I} $|a_1 t|\leq \sqrt{2}$. Let $x_i=a_i^2t^2/2$, then since $\cos(a_i t)\geq 1-a_i^2t^2/2\geq 0$, to establish (\ref{est1}) it is enough to show that
\[
\prod_{i=1}^n(1-x_i)+x_1\geq \prod_{i=2}^n\frac{1}{1+x_i}
\mbox{ for } 1\geq x_1\geq x_2\geq \ldots\geq x_n\geq 0. 
\]
However,
\begin{align*}
\prod_{i=2}^n(1+x_i)&\Big[\prod_{i=1}^n(1-x_i)+x_1\Big]
=(1-x_1)\prod_{i=2}^n(1-x_i^2)+x_1\prod_{i=2}^n(1+x_i)
\\
&\geq (1-x_1)(1-\sum_{i=2}^nx_i^2)+x_1(1+\sum_{i=2}^n x_i)
\geq 1-\sum_{i=2}^n x_i^2+\sum_{i=2}^nx_1x_i\geq 1.
\end{align*}

{\bf Case II} $\sqrt{2}\leq |a_1 t|\leq \pi/2$. Then
\[
\prod_{i=1}^n\cos(a_i t)+\frac{1}{2}a_{1}^2t^2\geq \frac{1}{2}a_{i}^2t^2
\geq 1 \geq \prod_{i=2}^{n}\frac{1}{1+a_{i}^2t^2/2}.
\]

{\bf Case III} $|a_1 t|\geq \pi/2$. Then
\[
\prod_{i=1}^n\cos(a_i t)+\frac{1}{2}a_{1}^2t^2\geq \frac{1}{2}a_{i}^2t^2-|\cos(a_1 t)|
\geq 1 \geq \prod_{i=2}^{n}\frac{1}{1+a_{i}^2t^2/2}.
\] 
\end{proof}

Using the above lemma we may now compare moments of Bernoulli and exponential sums
in the special case $p\in [2,4]$.

\begin{lem}
\label{p_2_4}
Let $|a_1|\geq |a_2|\geq \ldots \geq |a_n|$. Then for any $2\leq p\leq 4$,
\begin{equation}
\label{comp1}
\Ex\Big|\sum_{i=1}^n a_{i}\ve_i\Big|^{p}\geq \Ex\Big|\sum_{i=2}^n a_i\cale_i\Big|^{p}. 
\end{equation}
\end{lem}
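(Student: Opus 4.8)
The plan is to represent $p$-th moments through characteristic functions using the classical identity, valid for $2<p<4$,
\[
|x|^p=\frac{1}{C_p}\int_0^\infty\frac{\cos(xt)-1+\frac12 x^2t^2}{t^{1+p}}\,dt,\qquad
C_p:=\int_0^\infty\frac{\cos s-1+\frac12 s^2}{s^{1+p}}\,ds\in(0,\infty),
\]
obtained from the substitution $s=|x|t$; here one checks that the integrand is nonnegative (since $\frac{d}{ds}(\cos s-1+\frac12 s^2)=s-\sin s\ge0$ for $s\ge0$ and the function vanishes at $0$), so that $C_p>0$, and that it is integrable, because $\cos s-1+\frac12 s^2=\tfrac{1}{24}s^4+O(s^6)$ yields an $O(t^{3-p})$ bound near $0$ while the $\frac12 x^2t^2$ term yields $O(t^{1-p})$ near $\infty$. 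I would first dispose of the two endpoints: $p=2$ is trivial because $\Ex|\sum_{i=1}^n a_i\ve_i|^2=\sum_{i=1}^n a_i^2\ge\sum_{i=2}^n a_i^2=\Ex|\sum_{i=2}^n a_i\cale_i|^2$, and $p=4$ follows from a short direct fourth-moment computation (using $\Ex\cale_i^4=6$ and $a_1^2\ge a_i^2$), or simply by letting $p\uparrow4$ and using continuity of $p\mapsto\Ex|S|^p$; so it remains to treat $2<p<4$.

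Set $S=\sum_{i=1}^n a_i\ve_i$ and $T=\sum_{i=2}^n a_i\cale_i$. Since $\cos(yt)-1+\frac12 y^2t^2\ge0$ for all real $y$, Tonelli's theorem applies with no extra integrability hypothesis and gives
\[
C_p\,\Ex|S|^p=\int_0^\infty\frac{\Ex\cos(tS)-1+\frac12\,\Ex(S^2)\,t^2}{t^{1+p}}\,dt,
\]
and likewise for $T$. Next I would insert the known characteristic functions $\Ex\cos(tS)=\prod_{i=1}^n\cos(a_it)$ and $\Ex\cos(tT)=\prod_{i=2}^n\bigl(1+\tfrac12 a_i^2t^2\bigr)^{-1}$ (the characteristic function of $\cale_i$ being $(1+\tfrac12 t^2)^{-1}$), together with $\Ex S^2-\Ex T^2=\sum_{i=1}^n a_i^2-\sum_{i=2}^n a_i^2=a_1^2$. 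Subtracting the two representations (each integral being separately convergent by the bounds above) then yields
\[
C_p\bigl(\Ex|S|^p-\Ex|T|^p\bigr)=\int_0^\infty\frac{\prod_{i=1}^n\cos(a_it)+\frac12 a_1^2t^2-\prod_{i=2}^n\bigl(1+\tfrac12 a_i^2t^2\bigr)^{-1}}{t^{1+p}}\,dt.
\]

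The numerator of this last integrand is exactly the left-hand side of inequality (\ref{est1}) in the preceding lemma minus its right-hand side, hence nonnegative for every $t$; therefore the integral is $\ge0$ and (\ref{comp1}) follows. The only points that require care are the integrability bookkeeping at $t=0$ and $t=\infty$ — which is precisely what confines the argument to the range $2<p<4$ where the representation is available — and the elementary verification of the endpoint cases. Once the preceding lemma is invoked the comparison is immediate, so I do not anticipate a real obstacle beyond these routine checks.
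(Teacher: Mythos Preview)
Your proposal is correct and follows essentially the same route as the paper: both arguments invoke the integral representation $\Ex|X|^p=C_p\int_0^\infty(\varphi_X(t)-1+\tfrac12 t^2\Ex X^2)\,t^{-p-1}\,dt$ for $2<p<4$ (the paper cites this as Lemma~4.2 of Haagerup), subtract the two expressions, and observe that the resulting integrand is nonnegative by the preceding lemma. Your write-up is slightly more careful about the endpoints and the integrability/Tonelli justification, but there is no substantive difference.
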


\begin{proof}
Let $S_1=\sum_{i=1}^n a_{i}\ve_i$ and $S_2=\sum_{i=2}^n a_i\cale_i$, obviously
we may assume that $2<p<4$.
By Lemma 4.2 of \cite{Ha} we have for any random variable $X$ with finite fourth moment,
\[
\Ex |X|^p=C_p\int_{0}^{\infty}\Big(\varphi_X(t)-1+\frac{1}{2}t^2\Ex|X|^2\Big)t^{-p-1}dt.
\]
where $\varphi_X$ is the characteristic function of $X$ and $C_p=-\frac{2}{\pi}\sin(\frac{p\pi}{2})\Gamma(p+1)>0$. Notice that by Lemma 1, 
\[
\varphi_{S_1}(t)-\varphi_{S_2}(t) =
\prod_{i=1}^n\cos(a_i t)-\prod_{i=2}^{n}\frac{1}{1+a_{i}^2t^2/2}\geq -a_1^2t^2/2,
\]
thus
\[
\Ex|S_1|^p-\Ex|S_2|^p=
C_p\int_{0}^{\infty}\Big(\varphi_{S_1}(t)-\varphi_{S_2}(t)+a_1^2t^2/2\Big)t^{-p-1}dt
\geq 0.
\]
\end{proof}

To generalize the above result to arbitrary $p>2$ we need one more easy estimate.

\begin{lem}
For any real numbers $a,b$ we have
\begin{equation}
\label{rec1}
\Ex|a\cale+b|^p= |b|^p+\frac{p(p-1)}{2}a^2\Ex|a\cale+b|^{p-2} \mbox{ for } p\geq 2
\end{equation}
and
\begin{equation}
\label{rec2}
\Ex|a\ve+b|^p\geq |b|^p+\frac{p(p-1)}{2}a^2|b|^{p-2} \mbox{ for } p\geq 3.
\end{equation}
\end{lem}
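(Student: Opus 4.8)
The plan is to prove both identities by a direct computation based on integration by parts, exploiting symmetry to reduce each expectation to an integral over $[0,\infty)$ and the explicit exponential density.

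For \eqref{rec1}, I would write $\Ex|a\cale+b|^p$ as an integral against the density $2^{-1/2}\exp(-\sqrt2\,|x|)$ (assuming $a\neq 0$; the case $a=0$ is trivial, and we may take $a>0$ by symmetry of $\cale$). After the substitution $y=ax+b$ this becomes a constant times $\int_{\er}|y|^p\exp(-\sqrt2\,|y-b|/a)\,dy$. The cleanest route, though, is to integrate by parts directly in the variable $x$: since $\frac{d}{dx}\mathrm{sgn}(x)e^{-\sqrt2|x|}=-\sqrt2\,e^{-\sqrt2|x|}$ away from $0$, one can express the density's integral against $|a\cale+b|^p$ via the antiderivative relation and pick up the boundary term at $x=0$, which produces exactly $|b|^p$. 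Two integrations by parts (bringing down two factors from differentiating $|a x+b|^p$, each contributing a factor $p$ resp.\ $p-1$ and a factor $a$) yield the coefficient $\frac{p(p-1)}{2}a^2$ in front of $\Ex|a\cale+b|^{p-2}$; the normalization of the density is arranged precisely so that the $\sqrt2$ factors cancel against the variance-$1$ scaling. I expect the bookkeeping of boundary terms at $x=0$ to be the only delicate point, since $|ax+b|^p$ and its derivatives must be evaluated there and the kink of $e^{-\sqrt2|x|}$ contributes.

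For \eqref{rec2}, by symmetry of $\ve$ we have $\Ex|a\ve+b|^p=\frac12(|a+b|^p+|a-b|^p)$, so the claim is the elementary inequality
\[
\tfrac12\bigl(|b+a|^p+|b-a|^p\bigr)\geq |b|^p+\tfrac{p(p-1)}{2}a^2|b|^{p-2}
\]
for all real $a,b$ and $p\geq 3$. I would prove this by Taylor's theorem with the Lagrange form of the remainder applied to $g(s)=|b+s|^p$ at $s=0$: for $p\geq 3$ the function $g$ is $C^2$ with $g''(s)=p(p-1)|b+s|^{p-2}$, and writing $g(a)+g(-a)=2g(0)+g''(\xi_1)a^2/2+g''(\xi_2)a^2/2\geq 2|b|^p+p(p-1)|b|^{p-2}a^2$ would follow once one checks $g''$ attains its minimum over the relevant interval at the endpoints or that $|b+\xi|^{p-2}\ge$ something — but this last step is false in general (take $b$ small). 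The correct convexity fact is that $s\mapsto|b+s|^p$ has a convex second derivative issue only for $p\ge 3$; instead I would argue that $h(a):=\tfrac12(|b+a|^p+|b-a|^p)-|b|^p-\tfrac{p(p-1)}{2}a^2|b|^{p-2}$ satisfies $h(0)=h'(0)=0$ and $h''(a)=p(p-1)\bigl[\tfrac12(|b+a|^{p-2}+|b-a|^{p-2})-|b|^{p-2}\bigr]\geq 0$, the bracket being nonnegative because $t\mapsto|t|^{p-2}$ is convex for $p\geq 3$ (here $p-2\geq 1$). Hence $h$ is convex with a critical point at $0$, so $h\geq 0$.

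The main obstacle is genuinely just \eqref{rec1}: making the integration-by-parts argument rigorous across the singularity of the density at the origin, and verifying that all boundary contributions combine to give exactly $|b|^p$ with no leftover terms. Inequality \eqref{rec2} is then a short convexity argument as above, and the restriction $p\geq 3$ enters exactly where $p-2\geq 1$ is needed for convexity of $|t|^{p-2}$.
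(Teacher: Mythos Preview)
Your proposal is correct and follows essentially the same route as the paper. The paper packages the integration-by-parts step for \eqref{rec1} as the clean general identity $\Ex f(\cale)=f(0)+\tfrac12\Ex f''(\cale)$ for $f\in C^2$ of polynomial growth (applied to $f(x)=|ax+b|^p$), and for \eqref{rec2} uses exactly your final convexity argument: with $g(x)=\Ex|x\ve+b|^p$ one has $g(0)=|b|^p$, $g'(0)=0$, and $g''(x)=p(p-1)\Ex|x\ve+b|^{p-2}\geq p(p-1)|b|^{p-2}$ by convexity of $|t|^{p-2}$ for $p\geq 3$.
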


\begin{proof}
By integration by parts it is easy to show that for any $f\in C^{2}(\er)$ of at most
polynomial growth we have $\Ex f(\cale)=f(0)+\frac{1}{2}\Ex f''(\cale)$. If we take $f(x)=|ax+b|^p$
we obtain (\ref{rec1}). To prove (\ref{rec2}) it is enough to notice that the function
$g(x):=\Ex|x\ve+b|^p$ satisfies $g(0)=|b|^p$, $g'(0)=0$ and 
$g''(x)=p(p-1)\Ex|x\ve+b|^{p-2}\geq p(p-1)|b|^{p-2}$. 
\end{proof}

Our first theorem shows that moments of Bernoulli sums dominate moments of exponential sums up to few largest coefficients.

\begin{thm}
\label{rad_exp}
Let $|a_1|\geq |a_2|\geq \ldots \geq |a_n|$. Then for any $p\geq 2$,
\begin{equation}
\label{comp2}
\gamma_p^p\Big(\sum_{i=1}^n a_i^2\Big)^{p/2}\geq 
\Ex\Big|\sum_{i=1}^n a_{i}\ve_i\Big|^{p}\geq
\Ex\Big|\sum_{i=\lceil p/2\rceil}^na_{i}\cale_i\Big|^{p}
\geq \gamma_p^p\Big(\sum_{i=\lceil p/2\rceil}^n a_i^2\Big)^{p/2}.
\end{equation}
\end{thm}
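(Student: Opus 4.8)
The plan is to prove the four-term chain (\ref{comp2}) by establishing the three inequalities separately, reading the display from left to right. The outer two inequalities are direct consequences of Proposition \ref{extr}: applying its lower bound with $X_i=\cale_i$ (equivalently, the Gaussian case) gives $\gamma_p^p(\sum_i a_i^2)^{p/2}=\Ex|\sum_i a_i g_i|^p\ge \Ex|\sum_i a_i\ve_i|^p$ for $p\ge 3$, and for $2\le p\le 3$ one argues directly (the Bernoulli sum has a subgaussian tail, or one uses the known fact that $\|\sum a_i\ve_i\|_p\le\gamma_p\|a\|_2$ for $p\ge 2$ by hypercontractivity or a two-point inequality). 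The rightmost inequality $\Ex|\sum_{i\ge\lceil p/2\rceil}a_i\cale_i|^p\ge\gamma_p^p(\sum_{i\ge\lceil p/2\rceil}a_i^2)^{p/2}$ is again Proposition \ref{extr} applied to the tail sequence (exponential dominates Gaussian). So the real content is the \emph{middle} inequality
\[
\Ex\Big|\sum_{i=1}^n a_i\ve_i\Big|^p\ge \Ex\Big|\sum_{i=\lceil p/2\rceil}^n a_i\cale_i\Big|^p.
\]

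I would prove this by induction on the integer $k=\lceil p/2\rceil$, where the base cases $k=1$ and $k=2$ (i.e.\ $2\le p\le 4$) are exactly Lemma \ref{p_2_4}. For the inductive step, suppose the claim holds for $p-2$ (note $\lceil (p-2)/2\rceil=\lceil p/2\rceil-1$) and let $p>4$. Write $S_1=\sum_{i=1}^n a_i\ve_i$ and, peeling off the coefficient $a_k$ with $k=\lceil p/2\rceil$, write $S_2=a_k\cale_k+T$ where $T=\sum_{i=k+1}^n a_i\cale_i$. Conditioning on $\cale_k$ and using (\ref{rec1}) from Lemma 3 with $b=T$, $a=a_k$,
\[
\Ex|S_2|^p=\Ex\Big(\Ex\big[|a_k\cale_k+T|^p\,\big|\,T\big]\Big)
=\Ex|T|^p+\frac{p(p-1)}{2}a_k^2\,\Ex|S_2|^{p-2}.
\]
On the Bernoulli side I want the reverse-type bound: conditioning $S_1$ on $\ve_1$, writing $S_1=a_1\ve_1+R$ with $R=\sum_{i=2}^n a_i\ve_i$, and using (\ref{rec2}) from Lemma 3 with $b=R$, $a=a_1$ (valid since $p>4\ge 3$),
\[
\Ex|S_1|^p\ge \Ex|R|^p+\frac{p(p-1)}{2}a_1^2\,\Ex|R|^{p-2}\ge \Ex|R|^p+\frac{p(p-1)}{2}a_k^2\,\Ex|R|^{p-2},
\]
using $|a_1|\ge|a_k|$. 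Now $R=\sum_{i=2}^n a_i\ve_i$ is a Bernoulli sum whose ordered coefficients are $|a_2|\ge\cdots\ge|a_n|$, and $\lceil(p-2)/2\rceil$-th term of this shifted sequence is $a_{k}$ (since shifting the index by one and decreasing $p$ by two leaves the cutoff coefficient equal to $a_{\lceil p/2\rceil}$); hence the inductive hypothesis gives $\Ex|R|^{p-2}\ge\Ex|T|^{p-2}$ where $T=\sum_{i=k+1}^n a_i\cale_i$ — wait, one must be careful that $T$ starts at $k+1=\lceil(p-2)/2\rceil+\dots$; I would double-check the index bookkeeping here, but morally $R$ controls a longer tail than needed. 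Combining, it remains to see $\Ex|R|^p+\frac{p(p-1)}{2}a_k^2\Ex|R|^{p-2}\ge \Ex|T|^p+\frac{p(p-1)}{2}a_k^2\Ex|S_2|^{p-2}$, and since $\Ex|R|^p\ge\Ex|T'|^p$ for the appropriate exponential tail $T'$ by the inductive hypothesis at exponent $p$ (applied to $R$), while $\Ex|R|^{p-2}\ge\Ex|S_2|^{p-2}$ needs the hypothesis at $p-2$, the pieces fit.

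The main obstacle I anticipate is precisely this index bookkeeping: making sure that when we strip $\ve_1$ off the Bernoulli sum and $\cale_k$ off the exponential sum, the residual sums $R$ and $S_2$ are in the right relation for the two applications of the inductive hypothesis (one at exponent $p$ comparing $R$ to an exponential sum starting at index $k+1$ in the old labelling, one at exponent $p-2$ comparing $R$ to $S_2$). The clean way to organize this is to prove, by induction on $m\ge 0$, the statement $P(m)$: ``for every $p\in(2m+2\text{'s range})$ — more precisely for every $p\ge 2$ with $\lceil p/2\rceil\le m+1$ — and every nonincreasing $(|a_i|)$, one has $\Ex|\sum_{i=1}^n a_i\ve_i|^p\ge\Ex|\sum_{i=\lceil p/2\rceil}^n a_i\cale_i|^p$,'' so that at step $m$ one may invoke $P(m-1)$ at both exponents $p$ and $p-2$ when $\lceil p/2\rceil=m+1$. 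With the induction framed this way, Lemma \ref{p_2_4} is the base $P(1)$, Lemma 3 supplies the two recursions, and the monotonicity $|a_1|\ge|a_{\lceil p/2\rceil}|$ does the rest. Finally I would remark that the chain (\ref{comp2}) then yields, combined with (\ref{radmom}) and (\ref{expmom}), that when all $|a_i|=o(1/p)$ the cutoff terms $\sum_{i\le\lceil p/2\rceil}a_i$ are negligible against $\sqrt p\,\|a\|_2$, so $\|S\|_p\sim\gamma_p\|a\|_2$, which is the assertion advertised in the introduction.
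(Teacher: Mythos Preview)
Your strategy for the middle inequality---peel off $a_1\ve_1$ with (\ref{rec2}), peel off $a_{k}\cale_{k}$ with (\ref{rec1}) where $k=\lceil p/2\rceil$, then compare the two resulting pairs of terms by induction---is exactly the paper's. But your induction is not correctly set up. You claim that $P(m-1)$ can be invoked ``at both exponents $p$ and $p-2$'' when $\lceil p/2\rceil=m+1$; however $P(m-1)$ only covers exponents with $\lceil p/2\rceil\le m$, so it says nothing at exponent $p$. Concretely, the bound you need,
\[
\Ex|R|^p\ge \Ex|T|^p,\qquad R=\sum_{i=2}^n a_i\ve_i,\quad T=\sum_{i=k+1}^n a_i\cale_i,
\]
is the very statement you are proving, at the \emph{same} exponent $p$, for the shorter sequence $(a_2,\dots,a_n)$. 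Nothing in $P(m-1)$ gives this. The fix (and the paper's argument) is a \emph{double} induction: outer on $k$, inner on $n$. Once the claim holds for $k-1$ and all $n$, run a separate induction on $n$ for the current $k$; the $(n-1)$-case at exponent $p$ yields $\Ex|R|^p\ge\Ex|T|^p$, while the $(k-1)$-case at exponent $p-2$ (applied to $(a_2,\dots,a_n)$) yields $\Ex|R|^{p-2}\ge\Ex|S_2|^{p-2}$, and the two pieces combine exactly as you wrote.

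A smaller point on the outer inequalities: Proposition \ref{extr} is stated only for $p\ge 3$, so it does not by itself cover the range $2\le p<3$. You patch the leftmost inequality in that range, but not the rightmost. The paper handles both uniformly for $p\ge 2$ by citing Haagerup's optimal constant for the Khintchine inequality on the left and the representation of $\cale$ as a Gaussian scale mixture on the right.
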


\begin{proof}
To establish the middle inequality we will show by double induction first on $k$ then on 
$n$ that for $p\in (2k,2k+2]$,
\begin{equation}
\label{comp3}
\Ex\Big|\sum_{i=1}^n a_{i}\ve_i\Big|^{p}\geq
\Ex\Big|\sum_{i=k+1}^na_{i}\cale_i\Big|^{p}.
\end{equation}
For $k=1$ this follows by Lemma \ref{p_2_4}. Suppose that our assertion holds for $k-1$ and
let $p\in (2k,2k+2]$. For $n<k+1$ the inequality (\ref{comp2}) is obvious. If $n\geq k+1$
and (\ref{comp3}) holds for $n-1$ then by (\ref{rec2}), induction assumption, 
and (\ref{rec1}),
\begin{align*}
\Ex\Big|\sum_{i=1}^n a_{i}\ve_i\Big|^{p}
&\geq
\Ex\Big|\sum_{i=2}^n a_{i}\ve_i\Big|^{p}+a_1^2\frac{p(p-1)}{2}
\Ex\Big|\sum_{i=2}^n a_{i}\ve_i\Big|^{p-2}
\\
&\geq
\Ex\Big|\sum_{i=k+2}^n a_{i}\cale_i\Big|^{p}
+a_{k+1}^2\frac{p(p-1)}{2}\Ex\Big|\sum_{i=k+1}^n a_{i}\ve_i\Big|^{p-2}
\\
&=\Ex\Big|\sum_{i=k+1}^n a_{i}\cale_i\Big|^{p}.
\end{align*}

First inequality in (\ref{comp2}) follows by the Khintchine inequality with optimal
constant \cite{Ha} and
the last inequality in (\ref{comp2}) is an easy consequence of the fact that $\cale$ is
a mixture of gaussian r.v.'s (see Remark 5 in \cite{KLO}).
\end{proof}

Next two corollaries present more precise versions of inequalities (\ref{radmom}) and (\ref{expmom}).

\begin{cor}
\label{estrad}
For any $p\geq 2$ we have
\begin{align*}
\max\bigg\{\gamma_p\bigg(\sum_{i\geq \lceil p/2\rceil} (a_{i}^*)^2\bigg)^{1/2},
\frac{1}{\sqrt{2}}\sum_{i<\lceil p/2\rceil}&a_i^*\bigg\}
\leq \Big\|\sum_{i=1}^n a_{i}\ve_i\Big\|_{p}
\\
&\leq \gamma_p\bigg(\sum_{i\geq \lceil p/2\rceil} (a_{i}^*)^2\bigg)^{1/2}+
\sum_{i<\lceil p/2\rceil}a_i^*.
\end{align*}
\end{cor}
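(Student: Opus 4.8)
The plan is to derive Corollary \ref{estrad} from Theorem \ref{rad_exp} together with the two-sided bound \eqref{expmom} for exponential sums, keeping careful track of the constants so that both sides come out with the stated shape. Throughout I may assume without loss of generality that the coefficients are already nonincreasingly rearranged, so that $a_i=a_i^*$, and I write $k=\lceil p/2\rceil$ for brevity and $T=\bigl(\sum_{i\ge k}(a_i^*)^2\bigr)^{1/2}$ for the Gaussian-type term and $H=\sum_{i<k}a_i^*$ for the head term.

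\textbf{Upper bound.} First I would split $S=\sum_i a_i\ve_i$ as $S=S'+S''$ where $S'=\sum_{i<k}a_i\ve_i$ and $S''=\sum_{i\ge k}a_i\ve_i$, and use the triangle inequality in $L^p$, $\|S\|_p\le\|S'\|_p+\|S''\|_p$. For the head, since each $\ve_i$ is bounded by $1$ in absolute value, $\|S'\|_p\le\sum_{i<k}|a_i|=H$ by the triangle inequality (even pointwise). For the tail $S''$, the coefficients are all of size at most $a_k^*$, and since $i\ge k=\lceil p/2\rceil$ means $p\le 2i$, the comparison with a Gaussian is governed by the left inequality in \eqref{comp2}: applied to the coefficient vector $(a_i^*)_{i\ge k}$ it gives $\|S''\|_p\le\gamma_p\bigl(\sum_{i\ge k}(a_i^*)^2\bigr)^{1/2}=\gamma_p T$. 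Adding the two estimates yields $\|S\|_p\le\gamma_p T+H$, which is exactly the claimed upper bound.

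\textbf{Lower bound.} Here I would use the two parts of the max separately. The Gaussian term: the middle and last inequalities of \eqref{comp2}, applied to the full vector $(a_i^*)$, give $\|S\|_p^p\ge\Ex\bigl|\sum_{i\ge k}a_i^*\cale_i\bigr|^p\ge\gamma_p^p\bigl(\sum_{i\ge k}(a_i^*)^2\bigr)^{p/2}$, i.e.\ $\|S\|_p\ge\gamma_p T$. The head term: by contraction/symmetry one compares $\sum_{i<k}a_i^*\ve_i$ with a single-coordinate sum. Concretely, conditioning on all $\ve_i$ with $i\ge 2$ and using the fact that, for fixed real $b$, the function $s\mapsto\Ex|s\ve_1+b|^p$ is even and convex (its second derivative is $p(p-1)\Ex|s\ve_1+b|^{p-2}\ge0$), one sees that $\Ex|a_1\ve_1+\sum_{i\ge 2}a_i\ve_i|^p$ is minimized, over sign choices, in a way that lets one peel off coordinates one at a time; iterating, $\|S\|_p\ge\|a_1^*\ve_1+\dots+a_{k-1}^*\ve_{k-1}\|_p$ and then a further convexity/contraction step gives $\|S\|_p\ge\bigl\|\bigl(\tfrac1{k-1}\sum_{i<k}a_i^*\bigr)(\ve_1+\dots+\ve_{k-1})\bigr\|_p\ge$ something of order $\sum_{i<k}a_i^*$; the cleanest route, however, is simply $\|\sum_{i<k}a_i^*\ve_i\|_p\ge\|\sum_{i<k}a_i^*\ve_i\|_2=\bigl(\sum_{i<k}(a_i^*)^2\bigr)^{1/2}\ge\frac{1}{\sqrt{k-1}}\sum_{i<k}a_i^*$ — but that loses the factor $\sqrt{k-1}\approx\sqrt{p}$, so it is not good enough. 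Instead I would invoke \eqref{rec2}: $\Ex|a\ve+b|^p\ge|b|^p+\frac{p(p-1)}2a^2|b|^{p-2}$, which by induction on the number of head coordinates (taking $b$ to be the partial sum of the remaining $a_i^*$ with all signs $+$, which is legitimate since the worst sign pattern for a convex even function centered appropriately is not larger) gives $\|\sum_{i<k}a_i^*\ve_i\|_p^p\ge\bigl(\sum_{i<k}a_i^*\bigr)^p\cdot$(a correction), and then a direct comparison shows $\|S\|_p\ge\frac1{\sqrt2}\sum_{i<k}a_i^*$. Taking the maximum of the two lower bounds completes the proof.

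\textbf{Main obstacle.} The routine part is the upper bound and the Gaussian half of the lower bound; both are immediate from Theorem \ref{rad_exp}. The delicate point is getting the head term in the lower bound with the clean constant $1/\sqrt2$ and \emph{without} an extraneous $1/\sqrt{p}$ factor — a naive $L^2$ estimate is off by $\sqrt{p}$. The fix is to exploit that there are at least $k-1\ge p/2-1$ head coordinates each comparable to the largest one is false in general (the $a_i^*$ can decay), so the honest argument must be the peeling/induction via \eqref{rec2}, treating the remaining coefficients as contributing a deterministic shift $b=\sum_{i<k,\,i\ne j}a_i^*$ and checking that the convex even function $s\mapsto\Ex|s\ve+b|^p$ together with $p\ge 2$ forces $\|S\|_p\ge(\sum_{i<k}a_i^*)/\sqrt2$; verifying that the sign choice making all head terms add up is indeed the extremal one (or at least a valid lower bound) is the step that needs the most care.
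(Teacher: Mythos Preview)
Your upper bound and the Gaussian half of the lower bound are exactly as in the paper: split $S$ into head and tail, apply the Khintchine inequality with the optimal constant $\gamma_p$ to the tail, and use the middle and last inequalities in \eqref{comp2} for the Gaussian lower bound.

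The head term of the lower bound, however, has a genuine gap. Your peeling argument via \eqref{rec2} does not work as stated: in \eqref{rec2} the shift $b$ is a fixed real number, whereas after removing one coordinate the remaining sum $\sum_{i<k,\,i\ne j}a_i^*\ve_i$ is random. Conditioning gives
\[
\Ex\Big|\sum_{i<k}a_i^*\ve_i\Big|^p\ge
\Ex\Big|\sum_{i<k,\,i\ne j}a_i^*\ve_i\Big|^p
+\frac{p(p-1)}{2}(a_j^*)^2\,\Ex\Big|\sum_{i<k,\,i\ne j}a_i^*\ve_i\Big|^{p-2},
\]
which is not the same as replacing the random shift by the deterministic value $\sum_{i<k,\,i\ne j}a_i^*$; the function $b\mapsto|b|^p+\frac{p(p-1)}{2}a^2|b|^{p-2}$ is convex, so Jensen goes the wrong way for a lower bound. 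Your closing sentence (``a direct comparison shows $\|S\|_p\ge\frac{1}{\sqrt2}\sum_{i<k}a_i^*$'') is precisely the claim that needs proof.

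The paper's argument bypasses all of this with a one-line pointwise estimate: since adding independent symmetric summands only increases $L^p$ norms,
\[
\Big\|\sum_{i=1}^n a_i^*\ve_i\Big\|_p\ge\Big\|\sum_{i<k}a_i^*\ve_i\Big\|_p
\ge\Big(\Pr\big(\ve_i=1\text{ for all }i<k\big)\Big)^{1/p}\sum_{i<k}a_i^*
=2^{-(k-1)/p}\sum_{i<k}a_i^*,
\]
and since $k-1=\lceil p/2\rceil-1<p/2$ one has $2^{-(k-1)/p}>2^{-1/2}=1/\sqrt2$. This is the missing idea: restrict to the single event where all head signs are $+1$, and observe that $k-1<p/2$ makes the probability factor exactly $\ge 1/\sqrt2$ after taking the $p$-th root.
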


\begin{proof}
We have by the triangle inequality and the Khintchine inequality with optimal constant
\cite{Ha},
\begin{align*}
\Big\|\sum_{i=1}^n a_{i}\ve_i\Big\|_{p}&\leq
\Big\|\sum_{i\geq \lceil p/2\rceil} a_{i}^*\ve_i\Big\|_{p}
+\Big\|\sum_{i< \lceil p/2\rceil} a_{i}^*\ve_i\Big\|_{p}
\\
&\leq \gamma_p\Big(\sum_{i\geq \lceil p/2\rceil} (a_{i}^*)^2\Big)^{1/2}+
\sum_{i<\lceil p/2\rceil}a_i^*.
\end{align*}
To show the lower bound we use (\ref{comp2})
\[
\Big\|\sum_{i=1}^n a_{i}\ve_i\Big\|_{p}=\Big\|\sum_{i=1}^n a_{i}^*\ve_i\Big\|_{p}
\geq \gamma_p\Big(\sum_{i\geq \lceil p/2\rceil} (a_{i}^*)^2\Big)^{1/2}
\]
and an easy estimate
\[
\Big\|\sum_{i=1}^n a_{i}\ve_i\Big\|_{p}\geq 
\Big\|\sum_{i<\lceil p/2\rceil} a_{i}^*\ve_i\Big\|_{p}\geq
\big(\Pr(\ve_{i}=1 \mbox{ for }1\leq i< \lceil p/2\rceil)\big)^{1/p}\sum_{i<\lceil p/2\rceil}a_i^*.
\]

\end{proof}

\begin{cor}
\label{estexp}
For any $p\geq 2$ we have
\[
\max\Big\{\gamma_p\|a\|_2,\frac{p}{e\sqrt{2}}\|a\|_{\infty}\Big\}
\leq \Big\|\sum_{i=1}^n a_{i}\cale_i\Big\|_{p}
\leq \gamma_p\|a\|_2+p\|a\|_{\infty}.
\]
\end{cor}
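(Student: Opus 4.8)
The plan is to derive Corollary \ref{estexp} from Theorem \ref{rad_exp}, since the latter already provides essentially all the Gaussian comparison we need, together with the simple recursion of Lemma 3 (equation (\ref{rec1})).

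For the \emph{lower} bound, the term $\gamma_p\|a\|_2$ is immediate: Proposition \ref{extr} (or directly the fact that $\cale$ is a Gaussian mixture, as cited at the end of Theorem \ref{rad_exp}) gives $\|\sum a_i\cale_i\|_p\geq \gamma_p(\sum a_i^2)^{1/2}$. For the term $\frac{p}{e\sqrt2}\|a\|_\infty$, I would assume without loss of generality that $|a_1|=\|a\|_\infty$, condition on the other variables, and use (\ref{rec1}) with $b=\sum_{i\geq2}a_i\cale_i$: this yields $\Ex|a_1\cale_1+b|^p\geq \frac{p(p-1)}{2}a_1^2\Ex|a_1\cale_1+b|^{p-2}$, and iterating the one-variable estimate $\Ex|a\cale|^p = a^p\gamma_p^{(\cale)} $ — more cleanly, just use $\|\sum a_i\cale_i\|_p\geq \|a_1\cale_1\|_p = |a_1|\,(\Gamma(p+1))^{1/p}$ (the $p$-th moment of a standard symmetric exponential with the stated normalization), and bound $(\Gamma(p+1))^{1/p}\geq p/e$ via $\Gamma(p+1)\geq (p/e)^p$, which follows from $e^p=\sum_k p^k/k!\geq p^p/p! $. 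Tracking the variance normalization $\Ex\cale^2=1$ (density $2^{-1/2}e^{-\sqrt2|x|}$) contributes the factor $1/\sqrt2$.

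For the \emph{upper} bound I would split $a=(a_1,0,0,\dots)+(0,a_2,a_3,\dots)$ and apply the triangle inequality in $L^p$: $\|\sum a_i\cale_i\|_p\leq |a_1|\,\|\cale\|_p+\|\sum_{i\geq2}a_i\cale_i\|_p$. Iterating this peels off one coordinate at a time, but that is wasteful; instead I would argue that $\|\sum a_i\cale_i\|_p \leq \|\sum a_i X_i\|_p$ is \emph{not} what we want — rather, use Theorem \ref{rad_exp} in reverse. Actually the cleanest route: by Theorem \ref{rad_exp} applied with all coefficients equal to the $a_i$ but noting the middle inequality runs the wrong way, I instead reduce to the Bernoulli bound of Corollary \ref{estrad} via Proposition \ref{extr}? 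No — Proposition \ref{extr} says exponential \emph{dominates}. So the honest upper bound comes from: $\|\sum_{i=1}^n a_i\cale_i\|_p \leq \|a_1\cale_1\|_p + \|\sum_{i\geq 2} a_i\cale_i\|_p$, and for the tail sum $\sum_{i\geq2}a_i\cale_i$ we want $\leq \gamma_p\|a\|_2$. This is exactly the content that would follow if $(a_i)_{i\geq2}$ had all coefficients small compared to $1/p$; in general one uses that by (\ref{rec1}) and induction $\Ex|\sum_{i\geq 2}a_i\cale_i|^p \le \gamma_p^p(\sum_{i\ge 2}a_i^2)^{p/2}$ fails, so one really does need to peel. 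The correct statement to prove is: $\|\sum_{i\geq k+1}a_i\cale_i\|_p\leq \gamma_p(\sum_{i\geq k+1}a_i^2)^{1/2}+\text{(small)}$ when the remaining coefficients are $O(1/p)$; here one takes $k$ so that $|a_{k+1}|\le \sqrt 2\|a\|_2/\sqrt p$ or similar, bounds the first $k$ terms by $\sum_{i\le k}|a_i|\le k\|a\|_\infty\le p\|a\|_\infty$, and bounds the tail by $\gamma_p\|a\|_2$ using Theorem \ref{rad_exp}'s last inequality together with the Bernoulli upper bound (first inequality of (\ref{comp2})) and Proposition \ref{extr}. Wait — Proposition \ref{extr} again gives the wrong direction for this last step.

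Let me state the plan honestly: the upper bound $\|\sum a_i\cale_i\|_p\le \gamma_p\|a\|_2+p\|a\|_\infty$ should be obtained by taking $k=\lceil p/2\rceil-1$, writing $\|\sum_{i}a_i^*\cale_i\|_p\le \sum_{i<\lceil p/2\rceil}a_i^*\,\|\cale\|_p^{(1\text{-term, telescoped})}+\|\sum_{i\geq\lceil p/2\rceil}a_i^*\cale_i\|_p$, bounding the first sum by $\|\cale\|_\infty$-type peeling as $\le (\text{const})\sum_{i<\lceil p/2\rceil}a_i^*$ — but $\|\cale\|_p\sim p$, so peeling $p/2$ terms gives $\sim p\sum_{i<\lceil p/2\rceil}a_i^*\le p\cdot\frac p2\|a\|_\infty$, too big. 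The resolution: one does \emph{not} peel term by term with the full $\|\cale\|_p$; instead, for the block of the first $\lceil p/2\rceil$ coordinates one uses that their sum has $\|\cdot\|_p\le p\|a\|_\infty+\gamma_p\cdot(\text{their }\ell_2)\le p\|a\|_\infty+\gamma_p\|a\|_2$ — which is circular. \textbf{The main obstacle}, then, is establishing the upper bound with the clean constant; I expect it follows from Gluskin–Kwapie\'n (\ref{expmom}) together with comparing the universal constant there to $\gamma_p\sim\sqrt p$ and $p$ exactly, i.e. one shows the implicit constant in (\ref{expmom}) can be taken to be $1$ in front of each term by a direct moment computation: bound $\Ex|\sum a_i\cale_i|^p$ above by using (\ref{rec1}) inductively, $\Ex|\sum_{i\le n}a_i\cale_i|^p\le \big(\gamma_p(\sum a_i^2)^{1/2}+p\|a\|_\infty\big)^p$, proved by induction on $n$ where the inductive step uses (\ref{rec1}) with $b=\sum_{i\le n-1}a_i\cale_i$ and the convexity/monotonicity of $t\mapsto t^{(p-2)/p}$ plus Hölder. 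That induction — controlling $\Ex|b|^{p-2}$ in terms of $(\Ex|b|^p)^{(p-2)/p}$ and reassembling — is the real work, and is where I would spend the effort.
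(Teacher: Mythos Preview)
Your lower bound is fine and matches the paper's argument exactly: $\|S\|_p\ge\gamma_p\|a\|_2$ from the Gaussian-mixture fact (last inequality in (\ref{comp2})), and $\|S\|_p\ge\|a\|_\infty\|\cale\|_p=\|a\|_\infty\,2^{-1/2}(\Gamma(p+1))^{1/p}\ge \frac{p}{e\sqrt2}\|a\|_\infty$.

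The upper bound, however, never converges on a working argument. You correctly diagnose that term-by-term peeling costs a factor $\|\cale\|_p\sim p$ per term and is too expensive, and that Proposition~\ref{extr} points the wrong way. But the induction via (\ref{rec1}) you sketch at the end is not carried out, and in fact the recursion $M_n^p\le M_{n-1}^p+\tfrac{p(p-1)}{2}a_n^2 M_n^{p-2}$ (with $M_n$ appearing on both sides) does not close in any obvious way to give the stated bound with constant~$1$.

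What you are missing is that Theorem~\ref{rad_exp} itself already does all the work, used \emph{twice} in the forward direction. With $k=\lceil p/2\rceil-1$:
\begin{itemize}
\item[(i)] The first and middle inequalities of (\ref{comp2}) give
\[
\gamma_p\|a\|_2\;\ge\;\Big\|\sum_{i=1}^n a_i^*\ve_i\Big\|_p\;\ge\;\Big\|\sum_{i>k}a_i^*\cale_i\Big\|_p,
\]
so by the triangle inequality $\|S\|_p-\gamma_p\|a\|_2\le\big\|\sum_{i\le k}a_i^*\cale_i\big\|_p\le\|a\|_\infty\big\|\sum_{i\le k}\cale_i\big\|_p$.
\item[(ii)] Apply the middle inequality of (\ref{comp2}) again, now to $2k$ coefficients all equal to~$1$: since $\lceil p/2\rceil=k+1$,
\[
\Big\|\sum_{i\le k}\cale_i\Big\|_p\;\le\;\Big\|\sum_{i\le 2k}\ve_i\Big\|_p\;\le\;2k\;\le\;p.
\]
\end{itemize}
Step~(ii) is the key trick you did not find: bounding a sum of $k$ exponentials by a sum of $2k$ Rademachers via Theorem~\ref{rad_exp}, which is then trivially $\le 2k$. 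This replaces the wasteful $\|\cale\|_p\sim p$ cost per term by a total cost of $p$ for the whole head block.
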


\begin{proof}
Let $S=\sum_{i=1}^n a_{i}\cale_i$ and  $k=\lceil p/2\rceil-1$. We have $\|S\|_p\geq \gamma_p\|a\|_2$ by the last inequality in 
(\ref{comp2}). Moreover
\[
\|S\|_p\geq \|a\|_{\infty}\|\cale\|_p=\|a\|_{\infty}\frac{1}{\sqrt{2}}(\Gamma(p+1))^{1/p}
\geq \frac{p}{\sqrt{2}e}\|a\|_{\infty}.
\]
To get the upper bound we use twice bounds (\ref{comp2})
and obtain
\begin{align*}
\|S\|_p-\gamma_p\|a\|_2&\leq \|S\|_p-\Big\|\sum_{i>k}a_i^*\cale_i\Big\|_p
\leq \Big\|\sum_{i\leq k}a_i^*\cale_i\Big\|_p\leq 
\|a\|_{\infty}\Big\|\sum_{i\leq k}\cale_i\Big\|_p
\\
&\leq \|a\|_{\infty}\Big\|\sum_{i\leq 2k}\ve_i\Big\|_p\leq 
2k\|a\|_{\infty}\leq p\|a\|_{\infty}.
\end{align*}
\end{proof}

Now we may state a result that generalizes (up to a multiplicative constant)
previous corollaries.

\begin{thm}
\label{estlogconc}
Let $X_i$ be independent symmetric r.v.'s with logarithmically concave tails such that
$\Ex X_i^2=1$ and $|a_1|\geq |a_2|\geq \ldots\geq |a_n|$. Then for any $p\geq 3$,
\begin{align*}
\max\bigg\{\gamma_p\bigg(\sum_{i\geq \lceil p/2\rceil} a_{i}^2\bigg)^{1/2},
\Big\|\sum_{i<p}a_iX_i\Big\|_p\bigg\}&
\leq \Big\|\sum_{i=1}^n a_{i}X_i\Big\|_{p}
\\
&\leq \gamma_p\bigg(\sum_{i\geq \lceil p/2\rceil} a_{i}^2\bigg)^{1/2}+
\Big\|\sum_{i<p}a_iX_i\Big\|_p.
\end{align*}
\end{thm}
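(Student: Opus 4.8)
The plan is to bootstrap from the Bernoulli/exponential comparisons proved earlier via Proposition~\ref{extr}, which sandwiches $\|\sum a_iX_i\|_p$ between $\|\sum a_i\ve_i\|_p$ and $\|\sum a_i\cale_i\|_p$. For the \emph{lower bound}, I would use that $\|\sum_{i=1}^n a_iX_i\|_p \geq \|\sum_{i=1}^n a_i\ve_i\|_p \geq \gamma_p(\sum_{i\geq\lceil p/2\rceil} a_i^2)^{1/2}$, where the last step is the middle inequality of Theorem~\ref{rad_exp} combined with its final inequality; and separately $\|\sum_{i=1}^n a_iX_i\|_p \geq \|\sum_{i<p} a_iX_i\|_p$ by the standard fact that deleting symmetric independent summands cannot increase the $L_p$ norm (convexity/Jensen on conditional expectations). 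Taking the max of these two gives the left-hand side.

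For the \emph{upper bound} I would split $\sum_{i=1}^n a_iX_i = \sum_{i<\lceil p/2\rceil} a_iX_i + \sum_{i\geq\lceil p/2\rceil} a_iX_i$ and apply the triangle inequality. The first piece is dominated by $\|\sum_{i<p} a_iX_i\|_p$ (again deleting summands decreases the norm, since $\lceil p/2\rceil - 1 < p$). The second piece, consisting of the small coefficients, must be controlled by $\gamma_p(\sum_{i\geq\lceil p/2\rceil} a_i^2)^{1/2}$ up to a constant; here I would pass to the exponential upper extreme via Proposition~\ref{extr} to get $\|\sum_{i\geq\lceil p/2\rceil} a_iX_i\|_p \leq \|\sum_{i\geq\lceil p/2\rceil} a_i\cale_i\|_p$, and then invoke Corollary~\ref{estexp}: this is bounded by $\gamma_p(\sum_{i\geq\lceil p/2\rceil}a_i^2)^{1/2} + p\cdot a_{\lceil p/2\rceil}^*$. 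The key observation making this work is that $a_{\lceil p/2\rceil}^* \leq \frac{2}{p}\sum_{i<\lceil p/2\rceil} a_i^* \lesssim \|\sum_{i<p}a_iX_i\|_p$ — more precisely, since the $\lceil p/2\rceil - 1$ largest coefficients each dominate $a_{\lceil p/2\rceil}^*$, and since $\|\sum_{i<p}a_iX_i\|_p \geq \|\sum_{i<\lceil p/2\rceil}a_iX_i\|_p \gtrsim \sum_{i<\lceil p/2\rceil}a_i^*$ by the same lower-bound reasoning as in Corollary~\ref{estrad} (using $\|\sum_{i<p} a_iX_i\|_p \geq \|\sum_{i<p} a_i\ve_i\|_p$), the term $p\cdot a_{\lceil p/2\rceil}^*$ is absorbed into a constant times $\|\sum_{i<p}a_iX_i\|_p$.

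The main obstacle will be the bookkeeping of indices and the extra summands: the cleanest lower bound from Theorem~\ref{rad_exp} drops coefficients indexed $i < \lceil p/2\rceil$, whereas the statement's correction term keeps coefficients $i < p$; I need $p$ (not $\lceil p/2\rceil$) summands in the correction term precisely so that the leftover mass $p\cdot a_{\lceil p/2\rceil}^*$ from Corollary~\ref{estexp} can be reabsorbed — having only $\lceil p/2\rceil$ summands would leave roughly $\lceil p/2\rceil$ of them each $\geq a_{\lceil p/2\rceil}^*$, giving $\sum_{i<\lceil p/2\rceil} a_i^* \geq (\lceil p/2\rceil - 1)a_{\lceil p/2\rceil}^* \gtrsim p\cdot a_{\lceil p/2\rceil}^*$, which is in fact enough, so either index works up to constants; I would use whichever makes the constant-tracking transparent. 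A secondary point requiring care is that all the intermediate inequalities hold only for $p\geq 3$ (Proposition~\ref{extr}) or $p\geq 2$ (Theorem~\ref{rad_exp}), so restricting to $p\geq 3$ throughout is safe. Assembling the pieces and collecting constants then yields the claimed two-sided estimate with a universal multiplicative constant.
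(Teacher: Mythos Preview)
Your lower bound is fine and matches the paper's argument exactly.

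For the upper bound, however, your approach only yields the estimate \emph{up to a universal multiplicative constant}, and you say so explicitly at the end. But the theorem as stated is an exact inequality: there is no implied constant in front of either term on the right-hand side. The leftover term $p\cdot a^*_{\lceil p/2\rceil}$ that you pick up from Corollary~\ref{estexp} cannot be absorbed without paying a constant, so your argument does not prove the theorem.

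The paper avoids this loss by choosing the split point differently. With $k=\lceil p/2\rceil-1$ it cuts at $2k$ (not at $\lceil p/2\rceil$):
\[
\Big\|\sum_{i=1}^n a_iX_i\Big\|_p\le \Big\|\sum_{i\le 2k}a_iX_i\Big\|_p+\Big\|\sum_{i>2k}a_iX_i\Big\|_p.
\]
Since $2k<p$, the head is at most $\|\sum_{i<p}a_iX_i\|_p$. For the tail, Proposition~\ref{extr} gives $\|\sum_{i>2k}a_iX_i\|_p\le\|\sum_{i>2k}a_i\cale_i\|_p$, and then Theorem~\ref{rad_exp} applied to the shifted sequence $(a_{k+1},a_{k+2},\ldots)$ (note $\lceil p/2\rceil=k+1$) yields
\[
\Big\|\sum_{i>2k}a_i\cale_i\Big\|_p\le\Big\|\sum_{i>k}a_i\ve_i\Big\|_p\le \gamma_p\Big(\sum_{i>k}a_i^2\Big)^{1/2},
\]
with constant exactly $1$. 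No Corollary~\ref{estexp} is needed and no extra $p\|a\|_\infty$ term appears. The point you missed is that doubling the head length lets you invoke the Rademacher--exponential comparison of Theorem~\ref{rad_exp} once more on the tail and get the sharp Gaussian bound directly.
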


\begin{proof}
Lower bound is an immediate consequence of Theorem \ref{rad_exp} and Proposition \ref{extr}.
To get the upper bound let $k=\lceil p/2\rceil -1$. Then
\begin{align*}
\Big\|\sum_{i=1}^n a_{i}X_i\Big\|_{p}&\leq
\Big\|\sum_{i>2k}a_{i}X_i\Big\|_{p}+
\Big\|\sum_{i\leq 2k}a_{i}X_i\Big\|_{p}
&\leq \gamma_p\Big(\sum_{i>k} a_{i}^2\Big)^{1/2}+
\Big\|\sum_{i\leq 2k}a_iX_i\Big\|_p
\end{align*}
again by Theorem \ref{rad_exp} and Proposition \ref{extr}.
\end{proof}

{\bf Remark.} By the result of Gluskin and Kwapie\'n we have
\[
\Big\|\sum_{i<p}a_iX_i\Big\|_p\sim 
\sup\Big\{\sum_{i<p}a_ib_i\colon \sum_{i<p}M_i(b_i)\leq p\Big\},
\]
where $M_i(x)=x^2$ for $|x|\leq 1$ and $M_i(x)=-\ln\Pr(|X_i|\geq x)$ for $|x|>1$. 

\medskip

We conclude with one more result about Gaussian approximation of moments.

\begin{cor}
Let $X_i$ be as in Theorem \ref{estlogconc}, then for any $p\geq 3$,
\[
\Big| \Big\|\sum_{i=1}^n a_{i}X_i\Big\|_{p}-\gamma_p\|a\|_2\Big|\leq
p\|a\|_{\infty}.
\]
\end{cor}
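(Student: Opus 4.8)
The plan is to derive the Corollary directly from Theorem \ref{estlogconc} by controlling the auxiliary term $\|\sum_{i<p}a_iX_i\|_p$ from above by $p\|a\|_\infty$, after which both the upper and lower bounds in the theorem collapse to the desired two-sided estimate $|\,\|\sum_i a_iX_i\|_p-\gamma_p\|a\|_2\,|\le p\|a\|_\infty$, because the quantity $\gamma_p(\sum_{i\ge\lceil p/2\rceil}a_i^2)^{1/2}$ differs from $\gamma_p\|a\|_2$ by at most $\gamma_p(\sum_{i<\lceil p/2\rceil}a_i^2)^{1/2}$, which is itself at most $\gamma_p\sqrt{p/2}\,\|a\|_\infty$; one checks $\gamma_p\sqrt{p/2}\le p$ for $p\ge 3$ using the explicit formula $\gamma_p=2^{p/2}\Gamma(\tfrac{p+1}{2})/\sqrt\pi$ and Stirling-type bounds, so this piece is already absorbed.

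First I would bound $\|\sum_{i<p}a_iX_i\|_p$. Since $|a_i|\le\|a\|_\infty$ for all $i$ and there are fewer than $p$ terms, Proposition \ref{extr} gives $\|\sum_{i<p}a_iX_i\|_p\le\|\sum_{i<p}a_i\cale_i\|_p\le\|a\|_\infty\|\sum_{i\le\lceil p\rceil}\cale_i\|_p$, and then Corollary \ref{estexp} (applied with all coefficients equal to one) yields $\|\sum_{i<p}\cale_i\|_p\le\gamma_p\sqrt{p}+p$, which is of order $p$. A cleaner route, avoiding the mixed $\gamma_p\sqrt p$ term, is to go through the Bernoulli comparison exactly as in the proof of Corollary \ref{estexp}: $\|\sum_{i\le k}\cale_i\|_p\le\|\sum_{i\le 2k}\ve_i\|_p\le 2k$, so with the number of terms bounded by $p$ one gets $\|\sum_{i<p}a_iX_i\|_p\le p\|a\|_\infty$ on the nose. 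I would use this second argument since it produces precisely the constant claimed.

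Then I would combine: from the upper bound of Theorem \ref{estlogconc},
\[
\Big\|\sum_i a_iX_i\Big\|_p-\gamma_p\|a\|_2\le \gamma_p\Big(\sum_{i\ge\lceil p/2\rceil}a_i^2\Big)^{1/2}-\gamma_p\|a\|_2+\Big\|\sum_{i<p}a_iX_i\Big\|_p\le \Big\|\sum_{i<p}a_iX_i\Big\|_p\le p\|a\|_\infty,
\]
since the first difference is nonpositive; and from the lower bound, $\gamma_p\|a\|_2-\|\sum_i a_iX_i\|_p\le\gamma_p\|a\|_2-\gamma_p(\sum_{i\ge\lceil p/2\rceil}a_i^2)^{1/2}\le\gamma_p(\sum_{i<\lceil p/2\rceil}a_i^2)^{1/2}\le\gamma_p\sqrt{p/2}\,\|a\|_\infty\le p\|a\|_\infty$, using the subadditivity $\sqrt{u}-\sqrt{v}\le\sqrt{u-v}$ for $u\ge v\ge 0$ and the inequality $\gamma_p\le\sqrt 2\,\sqrt p$ valid for $p\ge 1$. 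Taking the maximum of the two estimates gives the claim.

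The only genuinely non-trivial point is the elementary inequality $\gamma_p\le\sqrt{2p}$ for $p\ge 3$ (equivalently $\Gamma(\tfrac{p+1}{2})\le\sqrt{\pi}\,(p/2)^{p/2}2^{-p/2}\sqrt 2$), which I expect to be the main thing to pin down carefully; it follows from standard bounds on the Gamma function, e.g. $\gamma_p$ is known to satisfy $\gamma_p\sim\sqrt p$ as $p\to\infty$ and the inequality $\gamma_p\le\sqrt p$ in fact holds for all $p\ge 1$ (it is classical that $\|\mathcal N(0,1)\|_p\le\sqrt p$), so even $\gamma_p\sqrt{p/2}\le p/\sqrt 2<p$, and everything goes through comfortably. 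Beyond that, the proof is just bookkeeping with the already-established Theorem \ref{estlogconc} and Proposition \ref{extr}.
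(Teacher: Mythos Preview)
Your lower-bound argument is correct and is exactly what the paper's terse ``immediately follows'' needs when it invokes Corollary~\ref{estrad}: from $\|\sum a_iX_i\|_p\ge\gamma_p(\sum_{i\ge\lceil p/2\rceil}a_i^2)^{1/2}$ one passes to $\gamma_p\|a\|_2-p\|a\|_\infty$ via $\sqrt{u}-\sqrt{v}\le\sqrt{u-v}$ and the elementary bound $\gamma_p\le\sqrt{p}$.

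The upper-bound argument, however, has a genuine gap. You route everything through Theorem~\ref{estlogconc} and then try to show $\|\sum_{i<p}a_iX_i\|_p\le p\|a\|_\infty$. Your ``cleaner route'' quotes the step $\|\sum_{i\le k}\cale_i\|_p\le\|\sum_{i\le 2k}\ve_i\|_p\le 2k$ from the proof of Corollary~\ref{estexp}, but there $k=\lceil p/2\rceil-1$ is a very specific value; the inequality is obtained from Theorem~\ref{rad_exp} with $n=2k$, and the right-hand side $\sum_{i=\lceil p/2\rceil}^{2k}\cale_i$ has $k$ terms only because $k=\lceil p/2\rceil-1$. For general $k$ the inequality can fail (e.g.\ $k=1$, $p=10$: $\|\cale\|_{10}>\|\ve_1+\ve_2\|_{10}$). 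In your situation the number of terms in $\sum_{i<p}$ is roughly $p$, not $p/2$, so even when the comparison is valid you obtain at best $\|\sum_{i<p}a_iX_i\|_p\lesssim 2p\|a\|_\infty$, not $p\|a\|_\infty$. Your first route via Corollary~\ref{estexp} likewise only gives $(\gamma_p\sqrt{p}+p)\|a\|_\infty$.

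The paper avoids this entirely: for the upper bound it applies Proposition~\ref{extr} and Corollary~\ref{estexp} directly to the \emph{full} sum,
\[
\Big\|\sum_{i}a_iX_i\Big\|_p\le\Big\|\sum_i a_i\cale_i\Big\|_p\le \gamma_p\|a\|_2+p\|a\|_\infty,
\]
which gives the constant $p$ on the nose with no further work. Going through Theorem~\ref{estlogconc} first is a detour that loses exactly the information you need.
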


\begin{proof}
The statement immediately follows by Proposition \ref{extr} and Corollaries \ref{estrad} 
and \ref{estexp}.
\end{proof}


\begin{thebibliography}{9}
\bibitem{5A}
\textsc{Figiel, T., Hitczenko, P., Johnson, W. B., Schechtman, G.} and \textsc{Zinn, J.} (1997). Extremal properties of Rademacher functions with applications to the Khintchine and Rosenthal inequalities.  \textit{Trans. Amer. Math. Soc.}  \textbf{349} 997--1027. MR1390980.

\bibitem{GK} \textsc{Gluskin, E. D.} and \textsc{Kwapie\'n, S.} (1995). Tail and moment estimates for
sums of independent random variables with logarithmically concave tails.
\textit{Studia Math.} \textbf{114} 303--309. MR1338834.

\bibitem{Ha} \textsc{Haagerup, U.} (1982). The best constants in the Khintchine
inequality. \textit{Studia Math.} \textbf{70} 231--283. MR0654838.

\bibitem{H} \textsc{Hitczenko, P.} (1993). Domination inequality for martingale transforms
of a Rademacher sequence. \textit{Israel J. Math.} \textbf{84} 161--178. MR1244666.


\bibitem{HK} \textsc{Hitczenko, P.} and \textsc{Kwapie\'n, S.} (1994). On the Rademacher series, 
\textit{Probability in Banach Spaces, 9, Sandjberg, Denmark}, Birkh\"auser, Boston, 
31--36. MR1308508.

\bibitem{KLO} \textsc{Kwapie\'n, S., Lata{\l}a, R.} and \textsc{Oleszkiewicz, K.} (1996).
Comparison of moments of sums of independent random variables and differential inequalities.
\textit{J. Funct. Anal.} \textbf{136} 258--268. MR1375162.

\bibitem{La} \textsc{Lata{\l}a, R.} (1997). Estimation of moments of sums of 
independent real random variables. \textit{Ann. Probab.}  \textbf{25} 1502--1513
MR1457628.

\bibitem{MS} \textsc{Montgomery-Smith, S. J.} (1990).  The distribution of Rademacher 
sums. \textit{Proc. Amer. Math. Soc.} \textbf{109} 517--522. MR1013975.

\end{thebibliography}
\end{document}